\title[Recurrence of the twisted planar random walk]{Recurrence of the twisted random walk in the plane}
\author{Ulrich Hab\"ock}
\address{Faculty of Mathematics, University of Vienna, Nordbergstra\ss e 15, A-1090 Vienna, Austria} \email{ulrich.haboeck@univie.ac.at}
\date{Preprint, January 28, 2006}
\newtheorem{thm}{Theorem}[section]
\newtheorem{lem}[thm]{Lemma}
\theoremstyle{definition}
\theoremstyle{remark}
\newtheorem{rem}[thm]{Remark}
\newtheorem{rem*}[]{Remark}
\newcommand{\comment}[1]{}
\newcommand{\Z}{\mathbb Z}
\newcommand{\Q}{\mathbb Q}
\newcommand{\R}{\mathbb R}
\newcommand{\C}{\mathbb C}
\newcommand{\mb}{\mathbf}
\newcommand{\mf}{\mathfrak}
\subjclass[2000]{37A20, 37A25, 37A50, 60G10, 60G50}
\thanks{The author was supported by the FWF research project P16004-MAT}
\begin{document}\allowdisplaybreaks\frenchspacing

\maketitle
\begin{abstract}
Suppose that $(X_k)_{k\geq 1}$ is a stationary process taking values in the complex plane.
For any choice of $\beta$ from the interval $[0,2\pi)$ we consider the random walk recursively defined by the equations $S_0^{(\beta)} = 0$ and $S_n^{(\beta)}= e^{i\beta} \, S_{n-1}^{(\beta)} + X_{n-1}$ for $n\geq 1$, and prove recurrence under diverse additional assumptions on the increment process $(X_k)_{k\in\Z}$.
For example if the increment process is $\alpha$-mixing and $E(|X_k|^2)$ is finite, then $S_n^{(\beta)}$ is recurrent for every fixed choice of the angle $\beta$ out of a set of full Lebesgue measure, no matter how slowly the mixing coefficients decay.
\end{abstract}

\section{The main results and their proofs}

Assume that $(X_k)_{k\in\Z}$ is a stationary process taking values in the complex plane $\C$. 
For any fixed angle $\beta$ in $[0,2\pi)$ we define the process $(S_n^{(\beta)})_{n\geq 0}$ by setting 
\begin{equation}\label{e:twisted random walk}
\begin{aligned}
S_0^{(\beta)} &= 0, \\ 
S_{n}^{(\beta)}& = e^{i\beta} \, S_{n-1}^{(\beta)} + X_{n-1}, \quad n\geq 1,
\end{aligned}
\end{equation}
These random sums can be considered as (the first coordinate of a) random walk in the locally compact group $G= \C\rtimes S^1= \{(z,e^{i\beta}): z\in\C, \beta\in[0,2\pi)\}$ with the usual product topology and group operation defined by 
\begin{equation*}
(z_2,e^{i\beta_2})\cdot (z_1,e^{i\beta_1}) = (z_2 + e^{i\beta_2} z_1, e^{i(\beta_2+\beta_1)}).
\end{equation*}
Indeed, if we put $Z_k(\omega)= (X_k(\omega), e^{i\beta})$ for all $k\in\Z$ then it follows immediately that the sums $Y_n^{(\beta)} = Z_{n-1}\cdot Z_{n-2} \cdots Z_0$ satisfy that
\begin{equation}\label{e:tilde S_n}
Y_n^{(\beta)}= \big(e^{i(n-1) \beta} \sum_{k=0}^{n-1} X_k \cdot e^{-i\beta k}, e^{i n\beta} \big) = \big( S_n^{(\beta)} , e^{i n \beta} \big),
\end{equation}
for every $n\geq 1$.
This random walk has been considered in \cite{Petersen} (altough in topological setting) over sofic shift spaces, i.e. finite-to-one factors of shifts of finite type, but it is also connected with the construction of invariant measures of non-hyperbolic toral automorphisms (cf. \cite{LS01}, \cite{LS02}).

Throughout this paper we restrict our considerations to the case that
\[
\beta\in [0,2\pi)\setminus 2\pi\Q,
\]
the `rational' case will be discussed in Remark \ref{r:rational case}. 

For any integer $n\in\Z$ we denote by $\mf P_n$ the sigma algebra generated by all random variables  $\{X_k\}_{k\leq n}$ and by $\mf F_n$ the sigma algebra generated by the random variables $\{X_k\}_{k > n}$. 
The process $(X_k)_{k\in\Z}$ is \emph{$\alpha$-mixing} if 
\begin{equation}\label{e:Rosenblatt}
\alpha(n)=\sup_{A\in\mf P_0, B\in \mf F_n} |P(A\cap B)-P(A)P(B)| \rightarrow 0,
\end{equation}
as $n$ tends to infinity. 
For $\alpha$-mixing processes we found the following criterion for recurrence of the twisted random walk.

\begin{thm}\label{t:alpha mixing}
Suppose $(X_k)_{k\in\Z}$ is a stationary, $\alpha$-mixing, complex valued process with $X_k$ being square integrable. 
Then for every choice of $\beta$ from a set of full Lebesgue measure in $I=[0,2\pi)\setminus 2\pi\Q$ the random walk $Y_n^{(\beta)}$ defined by (\ref{e:tilde S_n}) is recurrent.
Furthermore, under the additional assumption that $\sum_{k=0}^\infty |E(X_k\cdot \bar X_0)| < \infty$ we have recurrence for every $\beta$ belonging to $I$.
\end{thm}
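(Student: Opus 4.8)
The plan is to recognise $Y_n^{(\beta)}$ as a cocycle over the shift $T$ on the path space of $(X_k)_{k\in\Z}$ and to reduce recurrence to a second-moment estimate. Writing $\phi(\omega)=(X_0(\omega),e^{i\beta})\in G$, the defining recursion (\ref{e:twisted random walk}) shows that $Y_n^{(\beta)}=\phi(T^{n-1}\omega)\cdots\phi(\omega)$, so $(Y_n^{(\beta)})$ is the $G$-valued cocycle generated by $\phi$, with the twisted cocycle identity
\begin{equation*}
S_{n+m}^{(\beta)}=S_n^{(\beta)}\circ T^{m}+e^{in\beta}\,S_m^{(\beta)}.
\end{equation*}
Since $G=\C\rtimes S^1$ is a unimodular amenable group of polynomial growth of degree two, I would invoke the recurrence/transience dichotomy for cocycles: the cocycle is either recurrent or transient ($|S_n^{(\beta)}|\to\infty$ a.s.), the latter being a $T$-invariant, hence $0$--$1$, event by ergodicity, which $\alpha$-mixing of $(X_k)$ guarantees. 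The engine of the whole argument is a Chung--Fuchs-type criterion in the borderline dimension two: if $(S_{n_k}^{(\beta)}/\sqrt{n_k})$ stays tight along some subsequence, then the identity is an essential value and the cocycle is recurrent. Establishing this criterion for the twisted, non-independent cocycle is where I expect the real work to lie; the remaining steps merely verify its hypothesis through an $L^2$ computation.

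The second-moment computation is routine. With $r(j)=E(X_j\bar X_0)$ one has, by stationarity,
\begin{equation*}
E\big(|S_n^{(\beta)}|^2\big)=E\Big(\big|{\ts\sum_{k=0}^{n-1}}X_k e^{-i\beta k}\big|^2\Big)=\sum_{|j|<n}(n-|j|)\,r(j)\,e^{-i\beta j}\le n\sum_{j\in\Z}|r(j)|.
\end{equation*}
Under the hypothesis $\sum_{k\ge0}|E(X_k\bar X_0)|<\infty$ of the final statement the right-hand side is at most $Cn$ uniformly in $\beta$, so $S_n^{(\beta)}/\sqrt n$ is tight for every $\beta$ by Chebyshev's inequality, and the criterion yields recurrence for every $\beta\in I$. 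This uniform bound is precisely what upgrades the almost-every-$\beta$ conclusion of the first part to all $\beta$: for the measure-theoretic part one runs the same scheme but feeds the criterion through the averaging identity $\frac1{2\pi}\int_0^{2\pi}E(|S_n^{(\beta)}|^2)\,d\beta=n\,E(|X_0|^2)$, obtained from orthogonality of the characters, whence Fatou's lemma gives $\liminf_n E(|S_n^{(\beta)}|^2)/n<\infty$ for almost every $\beta$ and thus diffusive tightness along a subsequence; pathwise the ergodic theorem even yields $\frac1{2\pi n}\int_0^{2\pi}|S_n^{(\beta)}|^2\,d\beta\to E(|X_0|^2)$.

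The principal obstacle is the borderline recurrence criterion itself. In the uniform case I expect the summability $\sum|r(j)|<\infty$ to furnish enough decorrelation between the increments after time $m$ and $S_m^{(\beta)}$ to run a direct second-moment (Paley--Zygmund) argument on the occupation of a fixed neighbourhood of the identity; in the $\alpha$-mixing case this decorrelation is instead supplied by the mixing coefficients, and this is the only place they enter. The point that no rate is required is exactly that the mixing is used only qualitatively---through ergodicity and the factorisation of widely separated return events---and not through any speed of convergence. Finally, to pass from non-escape of $S_n^{(\beta)}$ to genuine recurrence of $Y_n^{(\beta)}$ in $G$, i.e.\ to match the angular coordinate $e^{in\beta}$ as well, I would use that the set of essential values is a closed subgroup together with the minimality of the irrational rotation, so that density of $\{e^{in\beta}\}$ in $S^1$ prevents the angular part from being an obstruction.
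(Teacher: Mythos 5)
Your reduction to a cocycle over the shift and your $L^2$ computations are fine, but the proof has a genuine gap at exactly the point you flag as ``where the real work lies'': the recurrence criterion you invoke does not exist in the form you state it. Tightness of $n^{-1/2}S_n^{(\beta)}$ (even along the full sequence) does \emph{not} by itself imply recurrence: a subsequential weak limit of these normalized sums could a priori put zero mass on a neighbourhood of the origin (e.g.\ be concentrated on an annulus), in which case no occupation-time or Paley--Zygmund argument can start, since those arguments presuppose a lower bound on $\sum_k P[|S_k^{(\beta)}|\le\eta\sqrt{k}]$. The criterion the paper actually proves (Theorem \ref{t:recurrence random walk}, via the growth theorem for transient cocycles in $\C\rtimes S^1$) requires the quantitative small-ball estimate $\liminf_n P[n^{-1/2}|S_n^{(\beta)}|\le\eta]\ge c\eta^2$, and the entire content of Lemma \ref{l:limit formula} is the bridge from tightness to this estimate. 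That bridge uses three ingredients absent from your sketch: (a) every weak limit $\sigma$ of $n^{-1/2}S_n^{(\beta)}$ is rotation-invariant, because the cocycle identity $S_{n+m}^{(\beta)}=S_m^{(\beta)}\circ T^n+e^{i\beta m}S_n^{(\beta)}$ together with stationarity forces $\sigma(e^{-i\beta m}\cdot)=\sigma$ and $\{e^{i\beta m}\}$ is dense for irrational $\beta/2\pi$; (b) a Bernstein blocking argument using the covariance inequality $|E(fg)-E(f)E(g)|\le 4\alpha(n)\|f\|_\infty\|g\|_\infty$ (so $\alpha$-mixing enters through this inequality, not merely through ergodicity) to show $\sigma=\varrho\ast\varrho(2^{1/2}\cdot)$ for another limit point $\varrho$; and (c) a maximal-density argument: by symmetry the smoothed density $g_r\ast(\sigma\ast\sigma)$ attains its maximum at $0$, which combined with (b) yields $\sigma([-r/2,r/2]^2)\ge cr^2$ uniformly over all limit points. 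Without (a)--(c) your argument proves nothing beyond tightness.

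Two smaller points. First, for the almost-every-$\beta$ part your Fatou argument only gives $\liminf_n E(|S_n^{(\beta)}|^2)/n<\infty$, i.e.\ tightness along a subsequence; the criterion is a $\liminf$ over all $n$ (and, in the underlying cocycle theorem, a Ces\`aro average), so subsequential tightness is not enough. The paper instead uses that $E(|S_n^{(\beta)}|^2)/n=m\ast K_{n-1}(e^{i\beta})$ converges a.e.\ to the density $h(e^{i\beta})$ of the absolutely continuous part of the spectral measure (Fej\'er), giving boundedness of the whole sequence for a.e.\ $\beta$. Second, your closing remark about essential values forming a closed subgroup and the irrational rotation handling the $S^1$ coordinate is not how the paper argues; the angular coordinate lives in a compact group and causes no difficulty once the planar small-ball estimate is in hand, since the growth theorem is formulated for neighbourhoods of the form $B(0,\eta)\times S^1$.
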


Note that if we do not impose restrictions on the decay of correlations one cannot expect recurrence for every  $\beta$ belonging to $[0,2\pi)\setminus 2\pi\Q$. 
At the end of the present section we provide an example of a mixing Gaussian process whose twisted random walk is transient for an arbitary fixed angle $\beta$. 

Theorem \ref{t:alpha mixing} applies in particular to the case of a topologically mixing sofic subshift of $\{-1,1\}^\Z$ equipped with its measure of maximal entropy:
With respect to that measure the process $X_k= \pi_k$, where $\pi_k$ is the projection onto the $k$-th coordinate, is $\alpha$-mixing with exponential decay of the correlations (this follows from \cite{Coven Paul}).

It is also worth to mention that under the assumption of very strong dependencies of the increment process (e.g. being generated by a minimal rotation on a compact group) an anologous result holds.

\begin{thm}\label{t:singular spectrum}
Suppose $(X_k)_{k\in\Z}$ is a stationary, ergodic, complex valued process with $X_k$ being square integrable. 
If $(X_k)_{k\in\Z}$ has singular spectrum, i.e. the spectral measure is singular with respect to the Lebesgue measure, then $Y_n^{(\beta)}$ is recurrent for almost every $\beta\in [0,2\pi)$.
\end{thm}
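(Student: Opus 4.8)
The plan is to reduce recurrence of $Y_n^{(\beta)}$ to a bound on the growth of the second moment $E(|S_n^{(\beta)}|^2)$, and to extract such a bound from the singularity of the spectral measure. Since $|S_n^{(\beta)}| = |\sum_{k=0}^{n-1} X_k e^{-ik\beta}|$ by \eqref{e:tilde S_n}, and since replacing $X_k$ by $X_k-E(X_0)$ alters $S_n^{(\beta)}$ only by the geometric sum $E(X_0)\sum_{k<n}e^{-ik\beta}$, which stays bounded because $\beta\notin 2\pi\Q$, I may assume the process centered without affecting recurrence. Writing $r(k)=E(X_k\,\bar X_0)=\int_0^{2\pi}e^{ik\theta}\,d\mu(\theta)$ for the spectral measure $\mu$ supplied by Herglotz's theorem, a direct expansion using stationarity gives the key identity
\begin{equation*}
E\big(|S_n^{(\beta)}|^2\big)=\sum_{j,k=0}^{n-1}e^{-i\beta(k-j)}\,r(k-j)=\int_0^{2\pi}\Big|\sum_{k=0}^{n-1}e^{ik(\theta-\beta)}\Big|^2 d\mu(\theta)=\int_0^{2\pi}D_n(\theta-\beta)\,d\mu(\theta),
\end{equation*}
where $D_n(\psi)=\sin^2(n\psi/2)/\sin^2(\psi/2)$.

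Dividing by $n$ turns the right-hand side into a Fej\'er mean of $\mu$: with the Fej\'er kernel $F_{n-1}=\tfrac1n D_n$ one has $\tfrac1n E(|S_n^{(\beta)}|^2)=\int_0^{2\pi}F_{n-1}(\theta-\beta)\,d\mu(\theta)$, which is, up to the factor $2\pi$, the $(n-1)$-st Fej\'er mean of $\mu$ evaluated at $\beta$. By the classical theorem on a.e.\ convergence of Fej\'er means of a measure, this quantity converges, for Lebesgue--a.e.\ $\beta$, to the Radon--Nikodym density of the absolutely continuous part of $\mu$. As $\mu$ is singular by hypothesis, that density vanishes almost everywhere, whence
\begin{equation*}
\frac1n\,E\big(|S_n^{(\beta)}|^2\big)\longrightarrow 0\qquad\text{for Lebesgue--a.e.\ }\beta\in[0,2\pi),
\end{equation*}
so that in particular $E(|S_n^{(\beta)}|^2)=o(n)$ for almost every $\beta$.

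It remains to pass from this subdiffusive growth to recurrence, and here I would invoke the same second-moment recurrence criterion that underlies Theorem \ref{t:alpha mixing}: if $\liminf_n \tfrac1n E(|S_n^{(\beta)}|^2)<\infty$ then $Y_n^{(\beta)}$ is recurrent. Concretely, recurrence of $Y_n^{(\beta)}$ amounts to conservativity of the skew product $T_\Phi(\omega,g)=(T\omega,(X_0(\omega),e^{i\beta})\cdot g)$ on $\Omega\times G$ with respect to the product of $P$ with Haar measure on $G$, and this is where the ergodicity of $(X_k)$ enters: it makes the skew product ergodic-theoretically meaningful and supplies the recurrent--transient dichotomy. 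The rate $E(|S_n^{(\beta)}|^2)=O(n)$ is exactly the borderline-admissible growth in the two-dimensional fibre $\C$, so the criterion applies; the strengthened rate $o(n)$ found here only makes the conclusion more comfortable. For orientation, the first assertion of Theorem \ref{t:alpha mixing} is the same computation with $\tfrac1n E(|S_n^{(\beta)}|^2)$ tending to the spectral density at $\beta$, which is finite for a.e.\ $\beta$ and hence again $O(n)$.

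The main obstacle is precisely this last step. With independent increments one could read off $\sum_n P(|S_n^{(\beta)}|\le R)=\infty$ from the second moments and close the argument by Borel--Cantelli, but the increments here are only stationary and dependent, so the naive second Borel--Cantelli lemma is unavailable. The passage from the averaged estimate $E(|S_n^{(\beta)}|^2)=O(n)$ to an almost sure infinitude of returns must instead be effected through the ergodic-theoretic recurrence machinery for group-valued cocycles, via Hopf's ratio ergodic theorem and the conservativity criterion for $T_\Phi$; checking that the $O(n)$ bound actually feeds that machinery, rather than the routine spectral computation, is where the real work lies.
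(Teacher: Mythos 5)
Your spectral computation is exactly the paper's: the Fej\'er-kernel identity $\tfrac1n E(|S_n^{(\beta)}|^2)=m\ast K_{n-1}(e^{i\beta})$, a.e.\ convergence of Fej\'er means to the density of the absolutely continuous part, and hence $\tfrac1n E(|S_n^{(\beta)}|^2)\to 0$ for a.e.\ $\beta$ when the spectrum is singular. Up to that point you match the paper. The gap is in the final step, and you flag it yourself without closing it. The criterion you invoke --- ``if $\liminf_n \tfrac1n E(|S_n^{(\beta)}|^2)<\infty$ then $Y_n^{(\beta)}$ is recurrent'' --- is not the criterion underlying Theorem \ref{t:alpha mixing}, and it is not available. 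What the paper actually uses is Theorem \ref{t:recurrence random walk}: one must produce the quantitative lower bound $\liminf_n P[n^{-1/2}|S_n^{(\beta)}|\le\eta]\ge c\eta^2$. In Theorem \ref{t:alpha mixing}, $O(n)$ second moments only give tightness of the laws of $n^{-1/2}S_n^{(\beta)}$; converting tightness into the lower bound is the entire content of Lemma \ref{l:limit formula}, and that conversion uses $\alpha$-mixing (to get the divisibility relation $\sigma=\varrho\ast\varrho(2^{1/2}\,\cdot\,)$) in an essential way. Under the hypotheses of Theorem \ref{t:singular spectrum} there is no mixing assumption, so that route is closed, and mere ergodicity plus $O(n)$ moments does not feed the machinery.

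The correct finish is short and you had all the ingredients: since $\tfrac1n E(|S_n^{(\beta)}|^2)\to 0$, Chebyshev gives $n^{-1/2}S_n^{(\beta)}\to 0$ in probability, hence $P[n^{-1/2}|S_n^{(\beta)}|\le\eta]\to 1\ge \eta^2$ for every $\eta\in(0,1)$, and the hypothesis of Theorem \ref{t:recurrence random walk} holds trivially with $c=1$; recurrence follows. Note this inverts your assessment: the $o(n)$ rate is not a comfort but the whole point --- it is exactly what lets the singular-spectrum case bypass Lemma \ref{l:limit formula} and its mixing hypothesis. Your closing paragraph about Hopf's ratio ergodic theorem and conservativity of the skew product describes machinery the paper does not use at this stage (the cocycle-recurrence work is all packed into Theorem \ref{t:recurrence random walk}, which is taken as given here), and it does not substitute for the missing Chebyshev step.
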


Both theorems are simple applications of the following recurrence criterion for stationary random walks in the locally compact group $G=\C\rtimes S^1$.
 
\begin{thm}[cf. Theorem \ref{t:recurrence criterion} in Section \ref{s:recurrence}]\label{t:recurrence random walk}
Suppose that $(Z_k)_{k\in\Z}$ is a stationary and ergodic process with values in $\C\rtimes S^1$, and set $S_n= Z_{n-1}\cdots Z_1\cdot Z_0$ for $n\geq 1$.
If there exist a constant $c>0$ such that
\[
\liminf_{n\rightarrow\infty} P\big[ n^{-1/2} |\pi_{\C} (S_n)| \leq \eta \big] \geq c \eta^2
\]
for all $\eta\in (0,1)$, then the random walk $S_n$ is recurrent. 
Here $\pi_\C$ denotes the projection of $\C\rtimes S^1$ onto $\C$.
\end{thm}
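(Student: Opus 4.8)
The plan is to recast recurrence as a conservativity statement for an associated skew product and then to exclude the dissipative alternative by means of the quantitative lower bound. Concretely, I would first realize the walk as a cocycle: let $(\Omega,\mu,T)$ be the ergodic, probability preserving two--sided shift carrying the stationary process, with $Z_0(\omega)$ the value at time $0$ and $Z_k=Z_0\circ T^k$, and consider the skew product $\tilde T(\omega,g)=(T\omega,\,Z_0(\omega)\cdot g)$ on $\Omega\times G$. Since $G=\C\rtimes S^1$ is unimodular, its Haar measure $m_G$ (Lebesgue on $\C$ times Lebesgue on $S^1$) is bi--invariant, so $\tilde T$ preserves the $\sigma$--finite measure $\mu\times m_G$, and one checks inductively that $\tilde T^n(\omega,g)=(T^n\omega,\,S_n(\omega)\,g)$. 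Recurrence of the walk $S_n$ is then equivalent to conservativity of $\tilde T$, since returns of $S_n(\omega)g$ to a neighbourhood of the identity correspond exactly to returns of $\tilde T^n(\omega,g)$ to $\Omega\times V$. The goal thus becomes to prove that $\tilde T$ is conservative.

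Next I would establish a dichotomy. The right translations $R_h(\omega,g)=(\omega,gh)$ preserve $\mu\times m_G$ and commute with $\tilde T$, hence they preserve the Hopf decomposition $\Omega\times G=C\sqcup D$ into its conservative and dissipative parts. As $R_G$ acts transitively on the fibres, $D$ is fibre--saturated, i.e.\ $D=\Omega_D\times G$ up to null sets; and $\tilde T(\Omega_D\times G)=T\Omega_D\times G$, so $\tilde T$--invariance of $D$ forces $T^{-1}\Omega_D=\Omega_D$, whence $\mu(\Omega_D)\in\{0,1\}$ by ergodicity of $T$. Thus $\tilde T$ is either conservative (and the walk recurrent) or totally dissipative (and the walk transient), and it remains only to rule out total dissipativity.

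Here the quantitative hypothesis enters, first through a convenient geometric simplification. For $V=B_r\times S^1$, a relatively compact neighbourhood of the identity, the $S^1$--factor is absorbed by the group law, $S_nV=B_r(\pi_\C(S_n))\times S^1$, so that $m_G(S_nV\cap V)$ depends only on $|\pi_\C(S_n)|$ and satisfies $m_G(S_nV\cap V)\ge\kappa r^2$ whenever $|\pi_\C(S_n)|\le r$. Taking $\eta=r/\sqrt n$ in the hypothesis gives, for $n>r^2$,
\[
(\mu\times m_G)\big((\Omega\times V)\cap\tilde T^{-n}(\Omega\times V)\big)=E\big(m_G(S_nV\cap V)\big)\ge\kappa c\,\frac{r^4}{n},
\]
so that $\sum_{n\le N}(\mu\times m_G)\big((\Omega\times V)\cap\tilde T^{-n}(\Omega\times V)\big)\gtrsim\log N\to\infty$; the expected occupation time of $\Omega\times V$ is infinite.

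The main obstacle is that an infinite expected occupation time does not by itself preclude total dissipativity, since the occupation could concentrate, that is, have heavy tails on a small set; and because no mixing is assumed, the pair correlations $P(S_m\in V,\,S_n\in V)$ cannot be factorised, so the usual second moment or Kochen--Stone route is unavailable. The decisive extra input is that the lower bound holds \emph{for every} scale $\eta\in(0,1)$ with the same constant, which forces $\pi_\C(S_n)/\sqrt n$ to spread out genuinely two--dimensionally and thereby prevents the occupation from concentrating. Exploiting this -- the content of the abstract recurrence criterion referred to in the statement -- one upgrades the logarithmic divergence to an almost everywhere infinite occupation of $\Omega\times V$, i.e.\ conservativity. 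Concretely I would argue by contradiction from total dissipativity: on the putative dissipative part either Hopf's ratio ergodic theorem or the exclusion of weakly wandering sets of positive measure shows that suitably normalised occupation times must vanish, which is incompatible with the uniform lower bound at all scales. This last step is where I expect the real work to lie.
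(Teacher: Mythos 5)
Your setup is sound and matches the paper's framework: recurrence of the walk is indeed equivalent to recurrence of the associated cocycle (equivalently, conservativity of the skew product), and the dichotomy between conservativity and total dissipativity via the commuting right translations and ergodicity of $T$ is correct. The first-moment computation showing that the expected occupation time of $\Omega\times V$ diverges logarithmically is also correct. But the proof has a genuine gap exactly where you place it: the step ruling out total dissipativity is not supplied, and the tools you gesture at will not supply it. Total dissipativity is perfectly compatible with an infinite \emph{expected} occupation time (the occupation can have heavy tails while being a.s.\ finite), and neither Hopf's ratio ergodic theorem nor the absence of weakly wandering sets converts the first-moment divergence into a contradiction; you have not actually used the hypothesis that the lower bound $c\eta^2$ holds \emph{at every scale} $\eta$, you have only asserted that it must matter. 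Without a mechanism that exploits the simultaneous validity at all scales, the argument stops at exactly the point where the theorem's content begins.

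The paper closes this gap with a packing argument specific to transient cocycles (Theorem \ref{t:recurrence criterion} and Lemma \ref{l:tau_n}). If the cocycle is transient, one passes to a cohomologous cocycle $\mb f'(n,\cdot)$ whose orbit $V_x=\{\mb f'(n,x):n\in\Z\}$ is uniformly $U$-separated and visits each point at most $L$ times (properties (\ref{e:V_x1}) and (\ref{e:V_x2})). This yields a \emph{pointwise} upper bound on the number of times $k\leq n$ with $\mb f'(k,x)\in\alpha_{k^{1/2}}(B_\eta)$ by a Haar-measure (volume) count, and, more importantly, the multi-scale estimate (\ref{e:tau_n two}): the sum $\sum_{k=0}^{N}2^{k}\tau_{2^{n+k}}(B_{2^{-k/2}\eta})$ stays bounded by a constant times $\lambda(B_\eta)$, uniformly in $N$, because $\sum_{g\in V_x}\log(1\vee\eta\|g\|^{-1})$ is controlled by the integrability of $\log(1\vee\|h\|^{-1})$ near $\{0\}\times S^1$ in two dimensions. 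Since the left-hand side has $N+1$ terms each of which your hypothesis would force to be at least of order $c\,\lambda(B_\eta)$ (note $2^{k}\lambda(B_{2^{-k/2}\eta})=\lambda(B_\eta)$), letting $N\rightarrow\infty$ gives the contradiction; this is precisely assertion (\ref{e:recurrence2}), namely $\liminf_{\eta\rightarrow 0+}\liminf_{n}\tau_n(B_\eta)/\eta^2=0$ for transient cocycles. That two-dimensional energy/capacity estimate is the ``real work'' you correctly anticipate but do not carry out, and it is not recoverable from general ergodic-theoretic principles about dissipative systems.
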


Its proof, which we postpone to Section \ref{s:recurrence}, makes use of abstract ergodic theory: it relies on a theorem on the growth of transient $\C\rtimes S^1$-valued cocycles over the action of an ergodic probability-preserving transformation, the proof of which is based on the same arguments as the results in \cite{joint recurrence} (treating the case $G=\R^d$, cf. also the survey \cite{Survey}) and \cite{Gernot} (in which the ideas from \cite{joint recurrence} were generalised to the group of unipotent $d\times d$-matrices).

Let us show how Theorem \ref{t:alpha mixing} and Theorem \ref{t:singular spectrum} can be deduced from Theorem \ref{t:recurrence random walk}.
Assume that $(X_k)_{k\in\Z}$ is stationary with finite second moments, and let $m= h d\lambda_{S^1} + m_\bot$ be the decomposition of its spectral measure into absolutely continuous and singular part (with respect to the Lebesgue measure $\lambda_{S^1}$ on the torus $S^1$).
Note that for almost every angle $\beta$ in $[0,2\pi)$ the $L^2$-norms of the sums
\[
S_n^{(\beta)}=\sum_{k=0}^{n-1} e^{i(n-1-k)\beta} X_k.
\] 
grow at most with rate $n^{1/2}$. 
In fact, a direct computation shows that
\[
\frac{1}{n} \cdot E\big(|S_n^{(\beta)}|^2\big) 
= \sum_{k=-(n-1)}^{n-1} \big(1-\frac{k}{n}\big) E(X_0 \bar X_k) \cdot e^{i 2\pi\beta k} = m\ast K_{n-1}(e^{i\beta}),
\]
with $K_{n-1}(e^{ix})=\frac{1}{n}\cdot\frac{\sin^2 (nx/2)}{\sin^2 (x/2)}$ being the Fej\'er kernel on $S^1$. 
By a well-known property of the Fej\'er kernel (see \cite{Katznelson}, e.g.),  
\begin{equation*}
\lim_{n\rightarrow\infty} m\ast K_{n-1} (e^{i\beta}) = h(e^{i\beta}),
\end{equation*}
for Lebesgue-almost every $\beta$.

There are different ways to establish a limit behaviour as needed for the recurrence criterion.
In many sitatuations (under additional assumptions on the moments and mixing properties, cf. the classical results in \cite{Ibragimov}) one could prove a central limit theorem for the sums $S_n^{(\beta)}$.
However, we do not aim proving such a limit theorem and follow an alternative approach: using the `structure' of the set of limiting distributions of $\{n^{-1/2}S_n^{(\beta)}\}_{n\geq 1}$ we directly show the following lemma. 
 
\begin{lem}\label{l:limit formula}
Suppose that $(X_k)_{k\in\Z}$ is a stationary, $\alpha$-mixing, complex valued process, and $\beta\in [0,2\pi) \setminus 2\pi\Q$. 
If the distributions of the normed sums $n^{-1/2} S_n^{(\beta)}$, $n\geq 0$, are uniformly tight then there exists a constant $c>0$ such that
\begin{equation}\label{e:limit formula}
\liminf_{n\rightarrow \infty} P\big[n^{-1/2} |S_n^{(\beta)}|\leq \eta\big] \geq c \cdot \eta^2,
\end{equation}
for all $\eta\in (0,1)$.
\end{lem}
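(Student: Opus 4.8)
The plan is to study the set $\mc L$ of all subsequential weak limits of the laws $\mu_n=\mathrm{law}\big(n^{-1/2}S_n^{(\beta)}\big)$ and to reduce the asserted $\liminf$ to a uniform small-ball estimate over $\mc L$. By uniform tightness and Prokhorov's theorem the family $\{\mu_n\}$ is relatively compact, so $\mc L$ is nonempty and weakly compact. A routine Portmanteau-plus-subsequence argument shows that \eqref{e:limit formula} follows once one produces a constant $c>0$ with $\nu\big(\{|z|<\eta\}\big)\ge c\,\eta^2$ for every $\nu\in\mc L$ and every $\eta\in(0,1)$: were the $\liminf$ smaller for some $\eta_0$, a weakly convergent subsequence would furnish a limit law violating this bound on the open disc of radius $\eta_0$. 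Two preliminary observations feed the later steps. First, the limit laws are centered: since $\beta\notin 2\pi\Z$, the mean $E\big(S_n^{(\beta)}\big)=E(X_0)\cdot\frac{e^{in\beta}-1}{e^{i\beta}-1}$ stays bounded, so $n^{-1/2}E\big(S_n^{(\beta)}\big)\to 0$. Second, by the computation preceding the lemma $n^{-1}E\big(|S_n^{(\beta)}|^2\big)\to h(e^{i\beta})$, so the elements of $\mc L$ share a common bounded second moment.

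The engine of the proof is a self-similarity of $\mc L$ coming from the block structure of the sums together with $\alpha$-mixing, and a rotation invariance coming from the irrationality of $\beta$. For the latter, fix $\nu\in\mc L$, say $\mu_{n_j}\Rightarrow\nu$, and write $S_{n+m}^{(\beta)}=e^{im\beta}S_n^{(\beta)}+T_{n,m}$, where $T_{n,m}=\sum_{k=n}^{n+m-1}e^{i(n+m-1-k)\beta}X_k$ has, by stationarity, the fixed law of $S_m^{(\beta)}$; hence $T_{n,m}/\sqrt{n+m}\to 0$ in probability as $n\to\infty$, and Slutsky gives $\mu_{n_j+m}\Rightarrow R_{m\beta}\nu$, the image of $\nu$ under multiplication by $e^{im\beta}$. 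Thus $\mc L$ is invariant under each rotation $R_{m\beta}$, and since $\{m\beta \bmod 2\pi\}$ is dense and $\theta\mapsto R_\theta\nu$ is weakly continuous, $\mc L$ is invariant under all rotations $R_\theta$, $\theta\in[0,2\pi)$.

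For the self-similarity, fix $p\ge 2$ and split $\{0,\dots,n-1\}$ into $p$ consecutive blocks of length $\approx n/p$ separated by gaps of width $g=g_n\to\infty$ chosen with $g_n/n\to 0$, so in particular $\alpha(g_n)\to 0$. Each block contributes $e^{i\psi_l}$ times a variable distributed as $S_{\lfloor n/p\rfloor}^{(\beta)}$, where the deterministic phase $\psi_l$ is a multiple of $\beta$; the gap contributions, being phases times copies of $S_{g_n}^{(\beta)}$, are negligible after dividing by $\sqrt n$ precisely because $\{\mu_n\}$ is tight and $g_n/n\to 0$; and the $p$ blocks are asymptotically independent because the relevant exponentials are bounded, so the rate-free covariance estimate $|E(fh)-E(f)E(h)|\le 4\,\alpha(g_n)\,\|f\|_\infty\|h\|_\infty$ applies and telescopes over the fixed number of blocks. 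Passing to a further subsequence of $(n_j)$ along which also $\mu_{\lfloor n/p\rfloor}\Rightarrow\kappa\in\mc L$, the phases $\psi_l\to\psi_l^\ast$ and the scale $\to 1/\sqrt p$, one obtains the convolution identity \[ \nu=\big(R_{\psi_1^\ast}D_{1/\sqrt p}\,\kappa\big)\ast\cdots\ast\big(R_{\psi_p^\ast}D_{1/\sqrt p}\,\kappa\big), \] with $D_s$ the scaling $z\mapsto sz$; by equidistribution of $\{m\beta\}$ the phases may be taken to sample the circle arbitrarily. Every limit law is therefore an arbitrarily fine convolution of scaled, rotated copies of limit laws of common bounded variance.

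It remains to extract the small-ball bound, and this is where the main difficulty lies. On the Fourier side the identity reads $\hat\nu(t)=\prod_{l=1}^p\hat\kappa\big(e^{-i\psi_l^\ast}t/\sqrt p\big)$, and the aim is to combine (i) vanishing means, (ii) common bounded variance, (iii) equidistributed phases, and (iv) the compatibility of these identities across all $p$ to conclude that the relevant limit laws are non-degenerate in both real dimensions and isotropic near the origin — the natural mechanism being that a centered, finite-variance, self-decomposable convolution structure of this kind is of Gaussian type, and rotation invariance forces the covariance to be a multiple of the identity. For such a $\nu$ the characteristic function is integrable, $\nu$ has a continuous density, and isotropy makes that density strictly positive at $0$ (the alternative $\nu=\delta_0$ satisfies the bound trivially, while concentration on a line or off the origin is excluded by centering together with rotation invariance). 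A standard two-dimensional Fourier smoothing inequality then yields $\nu\big(\{|z|<\eta\}\big)\ge c\,\eta^2$ uniformly in $\eta\in(0,1)$, the exponent $2$ being exactly the ambient dimension. I expect the delicate points to be making the rate-free asymptotic independence of the $p$ blocks rigorous and, above all, upgrading the self-similar convolution structure — a triangular array whose variances are bounded but not a priori uniformly integrable — to genuine non-degeneracy at the origin at every scale $\eta$, since bounded second moments and rotation invariance alone would still permit mass to escape onto a circle away from $0$.
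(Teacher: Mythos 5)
Your first three steps --- passing to the set of subsequential weak limits, deducing rotation invariance from the irrationality of $\beta$, and the block-splitting-with-mixing-gaps argument exhibiting each limit law as a convolution of scaled, rotated copies of another limit law --- coincide with the paper's proof (the paper performs the splitting only for $p=2$, which is all it ever needs). The genuine gap is in your final step, and you have diagnosed it yourself: you need the uniform small-ball bound $\nu\big(B(0,\eta)\big)\geq c\eta^2$ over the limit set, and you propose to obtain it by showing the limits are ``of Gaussian type'' with isotropic, strictly positive density at $0$ --- but this is exactly what you do not prove. It is also not a sound route: with no decay rate imposed on $\alpha(n)$, no central limit theorem is available (the paper says explicitly that it avoids proving one), and your exact $p$-fold decompositions with uniformly infinitesimal factors yield at best \emph{infinite divisibility} of the limits (Khinchine's triangular-array theorem), not Gaussianity; a centered, rotation-invariant, finite-variance infinitely divisible law need not be Gaussian, so ``self-decomposable convolution structure $\Rightarrow$ Gaussian type'' is false as stated. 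Your closing worry --- that bounded variance plus rotation invariance still permits all the mass to sit on a circle away from $0$ --- is precisely the obstruction your argument leaves open.

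The paper closes this gap with no Gaussianity at all, using only the $p=2$ identity you already have: every $\sigma\in\Sigma$ satisfies $\sigma(\,\cdot\,)=\varrho\ast\varrho\big(2^{1/2}\,\cdot\,\big)$ for some $\varrho\in\Sigma$, where $\varrho$ is symmetric (being rotation invariant) and the family $\Sigma$ is uniformly tight. For such $\varrho$ the convolution square automatically charges small balls at the right rate: smoothing with $g_r=h_r\ast h_r$, the function $\phi_r=g_r\ast(\varrho\ast\varrho)=(h_r\ast\varrho)\ast(h_r\ast\varrho)$ attains its maximum at $0$ (symmetry plus Cauchy--Schwarz), while tightness gives $\int_{[-K-1,K+1]^2}\phi_r\,du>1/4$ and hence $\phi_r(0)>\frac{1}{4}(2K+2)^{-2}$, which translates into $\varrho\ast\varrho\big([-r/2,r/2]^2\cdot\sqrt{2}\big)\geq c r^2$. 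Equivalently, in discrete form: partition $[-K,K]^2$ into $N=O(r^{-2})$ cells $Q_i$ of diameter less than $r$, and estimate $\varrho\ast\varrho\big(B(0,r)\big)\geq\sum_i\varrho(Q_i)\varrho(-Q_i)=\sum_i\varrho(Q_i)^2\geq\big(\sum_i\varrho(Q_i)\big)^2/N\geq c r^2$, using symmetry in the middle equality. The identity $\sigma=\varrho\ast\varrho(2^{1/2}\,\cdot\,)$ then transfers this bound to $\sigma$, giving (\ref{e:limit formula}). So the missing idea is that symmetry, tightness, and being a convolution square of a member of $\Sigma$ already suffice: the ``escape to a circle'' scenario is impossible for the convolution square of a symmetric measure, and that convolution-square structure is exactly what your step 3 provides.
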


The proof of the lemma uses the well-known property of the mixing coefficient $\alpha(n)$ from (\ref{e:Rosenblatt}) that 
\begin{equation}\label{e:correlations}
|E(fg) - E(f)E(g)| \leq 4 \alpha(n) \cdot \|f-E(f)\|_\infty \|g-E(f)\|_\infty
\end{equation}
for every two bounded complex functions $f$ and $g$ measurable with respect to the sigma algebras $\mf P_0$ and $\mf  F_n$ as defined in the introduction, respectively (cf. \cite{Ibragimov}, for example). 
%

\begin{proof}[Proof of Lemma \ref{l:limit formula}]
Let $\sigma_n$ be the distribution of the normed sum $n^{-1/2} S_n^{(\beta)}$ and $\Sigma$ the set of all weak limits of the sequence $\{\sigma_n\}_{n\geq 0}$.

We first show rotation-invariance of every measure $\sigma$ belonging to $\Sigma$. 
Assume that $\lim_{j\rightarrow\infty} \sigma_{n_j} = \sigma$. 
Then for every $m\geq 0$,
\[
n_j^{-1/2} S^{(\beta)}_{n_j+m} = n_j^{-1/2} S_m^{(\beta)}\circ T^{n_j} + 
e^{i\beta m}\: n_j^{-1/2} S_{n_j}^{(\beta)},
\]
where we introduce the notation $S_{m}^{(\beta)}\circ T^{n_j} = \sum_{k=0}^{m-1} e^{i(m-1-k)} X_{n_j+k}$.
When $j\rightarrow\infty$ the distributions of the left side converge to $\sigma$ whereas the distributions of the right side converge to rotated measure $\sigma(e^{-i \beta m} \;\cdot\;)$. 
Hence $\sigma(e^{-i \beta m}\,\cdot\,) = \sigma (\,\cdot\,)$ for every $m\geq 1$.
Since $\beta\notin 2\pi\Q$ the sequence $\{e^{-i\beta m}\}_{m\geq 0}$ is dense in the unit circle and $\sigma$ is rotation-invariant.

In the next step we show that to every measure $\sigma$ from $\Sigma$ we can find another measure $\varrho$ belonging to $\Sigma$ such that 
\begin{equation}\label{e:divisible}
\sigma(\,\cdot\,) = \varrho \ast \varrho \big(2^{1/2}\,\cdot\,\big). 
\end{equation}
This is shown by standard arguments using the $\alpha$-mixing condition.
As above we assume that $\lim_{j\rightarrow\infty} \sigma_{n_j}= \sigma$ and choose sequences of positive integers $\{m_j\}_{j\geq 1}$ and $\{d_j\}_{j\geq 1}$ satisfying  
$n_j= 2 m_j + d_j$, $d_j/n_j^{1/2}\rightarrow 0$, and $\alpha(d_j) \rightarrow 0$.
Then
\begin{equation*}
n_j^{-1/2} S_{n_j}^{(\beta)} = A_j + B_j + C_j,
\end{equation*}
with
\begin{equation*}
A_j = n_j^{-1/2} (S_{m_j}^{(\beta)}\circ T^{m_j+d_j}), \quad
B_j = e^{i m_j \beta} \: n_j^{-1/2} (S_{d_j}^{(\beta)}\circ T^{m_j}),
\end{equation*}
and
\begin{equation*}
C_j =  e^{i (m_j+d_j)} \: n_j^{-1/2} S_{m_j}^{(\beta)},
\end{equation*}
the `shifted' sums defined as above.
Passing to a subsequence we assume that also the distribution of the sums $m_j^{-1/2} S_{m_j}^{(\beta)}$ converge to a limit $\rho$ in $\Sigma$. 
Since $\varrho$ is rotation-invariant and $\lim_{j\rightarrow\infty} m_j/n_j= 1/2$, the distributions both of $A_j$ and $C_j$ converge to the measure $\varrho(2^{1/2}\;\cdot\;)$. 
Applying (\ref{e:correlations}),
\begin{equation*}
\big|E(e^{i(\mb t, A_j+C_j)}) - E(e^{i(\mb t, A_j)})\cdot E(e^{i(\mb t, C_j)})\big| \leq 16 \: \beta(d_j)\rightarrow 0,
\end{equation*}
and as $B_j\rightarrow 0$ in probability we conclude that for every $\mb t$ in $\R^2$,
\begin{equation*}
\hat\sigma(\mb t) = \lim_{j\rightarrow\infty} E(e^{i(A_j+B_j+C_j)}) = \hat\varrho(2^{-1/2}\mb t)^2,
\end{equation*}
where $\hat\sigma$ and $\hat\rho$ denote the Fourier transform of the respective measures.

To prove the assertion of the lemma we use the same argument as in the proof of Theorem 14 in \cite{Survey}. 
For $r\in (0,1)$ let $h_r:\C\rightarrow \R$ be the normed indicator function
\[
h_r= \frac{1}{r^2}\cdot 1_{[-r/2,r/2]},
\]
and set $g_r = h_r\ast h_r$. 
Then $\int g_r d\lambda = 1$, where $\lambda$ denotes the $2$-dimensional Lebesgue
measure, and $0\leq g_r \leq  1/r^2 \cdot 1_{[-r,r]^2}$. 
For every measure $\sigma$ belonging to $\Sigma$ define the function
\begin{equation*}
\phi_r(z) = g_r\ast (\sigma\ast\sigma) (z) = (h_r\ast \sigma) \ast (h_r \ast \sigma)(z),
\end{equation*}
and choose $K>0$ and  so that $\sigma([-K/2,K/2]^2)>1/2$ for all $\sigma\in\Sigma$. 
Then
\[
\int_{[-K-1,K+ 1]^2} \phi_r(u)\; du \geq (\sigma\ast\sigma) ([-K,K]^2)> 1/4
\]
for every $\sigma\in\Sigma$. 
Hence $\lambda \big(\{u\in\R^2 : \phi_r(u) > 1/4\cdot 1/(2K+ 2)^2 \} \big)> 0$. 
By rotation-invariance $\sigma$ is symmetric and therefore the function $\phi_r(z)$ attains its maximum at $z=0$. 
This implies that
\[
\phi_r(0)>  1/4 \cdot 1/(2K+ 2)^2
\]
for every $\sigma\in\Sigma$ and $r\in (0,1)$. 
Using (\ref{e:divisible}) we finally obtain that
\begin{equation*}
\begin{aligned}
\inf_{\sigma \in \Sigma}  \frac{1}{r^2}\cdot\sigma\big([-r/2,r/2]^2\big) &\geq \inf_{\sigma\in\Sigma} \frac{1}{r^2}\cdot \sigma\ast\sigma\big(\sqrt{2} \cdot [-r/2,r/2]^2 \big) \\
& \geq \frac{1}{2} \cdot \int g_{r/\sqrt{2}} \: d(\sigma\ast\sigma) 
> \frac{1}{8} \cdot \frac{1}{(2K+2)^2}
\end{aligned}
\end{equation*}
for every $r\in(0,1)$, which immediately implies formula (\ref{e:limit formula}).
\end{proof}

\begin{proof}[Proof of Theorem \ref{t:alpha mixing} and Theorem \ref{t:singular spectrum} using Theorem \ref{t:recurrence random walk}]
Suppose that the process $(X_k)_{k\in\Z}$ is $\alpha$-mixing.
It follows from the preceeding discussion that for almost every $\beta$ the $L^2$-norms of the normed sums $n^{-1/2} S_n^{(\beta)}$ are bounded and therefore their distributions are uniformly tight. 
Under the additional assumption that $\sum_k |E(X_0 \bar X_k)| < \infty$, the spectral measure $m$ is absolutely continuous with continous density $h$ and we have uniformly tightness even for every $\beta$.
For those $\beta$ which are not contained in the null set $2\pi\Q$, Lemma \ref{l:limit formula} together with Theorem \ref{t:recurrence random walk} implies recurrence of the random walk $Y_n^{(\beta)}$.
This shows Theorem \ref{t:alpha mixing}.

In the case of an ergodic process $(X_k)_{k\in\Z}$ with discrete spectrum, the density function $h$ is zero almost everywhere and we conclude that $n^{-1/2} S_n^{(\beta)}\rightarrow 0$ in probability, for almost every angle $\beta$.  
For these $\beta$, Theorem \ref{t:recurrence random walk} again yields recurrence of the random walk $Y_n^{(\beta)}$ and Theorem \ref{t:singular spectrum} is proved.
\end{proof}

\begin{rem}\label{r:rational case}
Let us shortly discuss the case when $\beta\in 2\pi\Q$. 
Suppose that $\beta=2\pi p/q$ with relatively prime integers $p$ and $q$.
If $q$ is even, then the proof of Lemma \ref{l:limit formula} still works out as we only used symmetry of the limit measures $\sigma$ belonging to $\Sigma$ in order to prove (\ref{e:limit formula}). 
However, it is not clear to the author how to show (\ref{e:limit formula}) in the case when $q$ is odd.
This case would be clear if one could show the following conjecture:
If $\mu$ is any probability measure in the complex plane then the convonlution of the rotated measures
\[
\rho= \mu\ast (\mu\circ R_\beta) \ast \cdots \ast (\mu\circ R_{(p-1)\beta})  
\]
satisfies the following maximal inequality:
\[
\rho\big(B(0,r)\big) \geq \sup_{z\in\C} \rho\big(B(z,r)\big),
\]
for arbitrary $r>0$.
Note that this inequality is obviously valid for even $q$ since $\rho$ is the symmetrisation of the measure $\rho'=\mu\ast (\mu\circ R_\beta) \ast \cdots \ast (\mu\circ R_{(p/2-1)\beta})$.

Anyway, recurrence of $Y_n^{(\alpha)}$ in the case $\beta= 2\pi p/q$ can be treated as follows: 
The process $(Y_{nq}^{(\beta)})_{n\geq 0}$ is an ordinary random walk in the complex plane with stationary increment process
\[
X_k'  = e^{i(q-1)\beta} \sum_{m= 0}^{q-1} e^{-im\beta} X_{kq + m}, \quad k\in\Z,
\]
and this random walk is recurrent if its increment process $(X_k')_{k\in\Z}$ is ergodic and the central limit theorem holds, which applies to a large variety of examples.
\end{rem}

Let us give an example of a process satisfying the assumptions of Theorem \ref{t:alpha mixing} and for which the random walk $Y_n^{(\beta)}$ is transient at an arbitrary point $\beta\in [0,2\pi)\setminus 2\pi\Q$.
Set 
\[
m=fd\lambda_{S^1}\quad\text{with}\quad f(e^{ix})= 1/|e^{ix}-e^{i\beta}|^{1/2}
\]
and let $(X_k)_{k\in\Z}$ be the uniquely determined real Gaussian process with zero mean and  $E(X_0X_k)=\int_0^{2\pi} e^{-ikx}dm(x)$. 
As $m$ is absolutely continuous the so constructed process is mixing\footnote{and therefore $\alpha$-mixing, as it is Gaussian} and it is easily verified that
\[
\sigma_n^2 = n\cdot\big(m\ast K_{n-1}(e^{i\beta})\big)\geq c\cdot n^{3/2}
\] 
for some constant $c>0$. 

We claim that the normed sums $\sigma_n^{-1} S_n^{(\beta)}$, which are Gaussian distributed with zero mean and covariance matrix $\beta_n= \big(a_{i,j}^{(n)}\big)_{i,j=1}^2$, converge in distribution to the Gaussian law with zero mean and covariance matrix  $A=1/2 \cdot \begin{pmatrix} 1 & 0 \\ 0 & 1 \end{pmatrix}$. 
In fact, the set $\Sigma$ of all limit distributions can only consist of Gaussian laws which, as shown in the proof of Lemma \ref{l:limit formula}, are invariant under rotations of the complex plane. 
But since the covariance matrices  $\beta_n=\big(a_{i,j}^{(n)}\big)_{i,j=1}^2$ satisfy that $a_{1,1}^{(n)}+a_{2,2}^{(n)}+2 a_{1,2}^{(n)} = 1$, the same relation holds for the convariance matrices of the measures belonging to $\Sigma$. 
Thus $\Sigma$ can only consist of a single distribution, namely the Gaussian law with zero mean and covariance matrix $A$. 

Hence for any $\eta>0$, 
\[
\begin{aligned}
P\big[|S_n^{(\beta)}| \leq \eta\big] 
&= P\big[|\sigma_n^{-1} S_n^{(\beta)}| \leq \sigma_n^{-1} \eta\big]\\
&= \frac{1}{(2\pi \det A_n)^{1/2}} \int_{|\mathbf x|< \sigma_n^{-1} \eta} e^{-\frac{1}{2} (\mathbf x, A_n^{-1}\mathbf x)} d\lambda_{\C}(\mathbf x) \\
&\sim \sqrt{2} \eta^2\pi \frac{1}{\sigma_n^2},
\end{aligned}
\]
as $A_n\rightarrow A$, which proves that $\sum_{n\geq 1} P\big[|S_n^{(\beta)}| \leq \eta \big] <\infty$ and therefore 
\[
P\big[|S_n^{(\beta)}|\leq \eta\text{ for infinitely many } n\geq 1\big]= 0.
\] 
In other words the random walk $Y_n^{(\beta)}$ is transient. 

If we choose in the example $m$ to be the point mass at $\beta$ then the resulting Gaussian process is ergodic with singular spectrum and by the same arguments as above one easily verifies transience of $Y_n^{(\beta)}$. 
This shows that the almost every assertion in Theorem \ref{t:alpha mixing} is essential.

\section{Recurrence in the group $\C\rtimes S^1$}\label{s:recurrence}


Let $T:X\longrightarrow X$ be an invertible measure preserving of a standard probability space $(X,\mf B, \mu)$ and $f:X\longrightarrow G$ be any Borel function taking values in $G=\C\rtimes S^1$.
The \emph{cocycle} generated by $f$ is defined by setting 
\begin{equation}\label{e:cocycle}
\mb f(n,x)=
\begin{cases}
f(T^{n-1} x)\cdots f(T x)\cdot f(x) & \textup{if}\enspace n\geq 1, \\
1_G & \textup{if}\enspace n=0, \\
f(-n,T^n x)^{-1} & \textup{if}\enspace n < 0,
\end{cases}
\end{equation}
where $1_G$ denotes the identity element in $G$.
The so defined function satisfies the \emph{cocycle identity}, i.e.
\begin{equation}\label{e:cocycle identity}
\mb f(n+m, x)= \mb f(n, T^m x) \cdot \mb f(m,x)
\end{equation}
for all $n,m \in\Z$ and $x$ in $X$. 

A cocycle $\mb f(n,\cdot)$ is \emph{recurrent} if for any measurable set $B$ with $\mu(B)>0$ and every open neighborhood $U$ of the identity $1_G$ in $G$ there exists an integer $n\neq 0$ such that
\[
\mu \big(B\cap T^n B \cap \{ x\in B: \mb f(n,x)\in U\}\big) > 0,
\]
otherwise it is said to be \emph{transient}.

The connection to stationary random walks is as usual: 
We assume any stationary process $(X_k)_{k\in\Z}$ to be generated by an invertible probability-preserving transformation $T$ of a probability space $(X,\mf B,\mu)$, i.e. $X_k= f\circ T^k$ for some Borel function $f:X\longrightarrow G$. 
The cocycle $\mb f(n,\cdot)$ generated by the function $f=X_0$ then satisfies that 
\[
\mb f(n,\,\cdot\,)= X_{n-1}\cdots X_1\cdot X_0,
\] 
for every $n\geq 1$. 
Both notions of recurrence, the probabilistic and the one for cocycles, coincide (this is shown in \cite{recurrence} for real valued cocycles but its proof is valid in any locally compact group).
  
The group $G=\C\rtimes S^1$ provides the following situation which differs only slightly from the one in \cite{Gernot}: there exist a one-parameter group of scaling automorphisms
\begin{equation*}
\alpha_\eta:\C\rtimes S^1\longrightarrow \C\rtimes S^1,\quad (x,e^{i\beta})\mapsto (\eta x,e^{i\beta}),
\end{equation*} 
where $\eta>0$ (one-parameter group in the sense that $\alpha_{\eta_1}\circ\alpha_{\eta_2}= \alpha_{\eta_1\eta_2}$ for every $\eta_1$, $\eta_2>0$) and these automorphisms contract $\C\rtimes S^1$ to its compact subgroup $\{0\}\times S^1$, i.e.  for every compact subset $C$ and $\varepsilon>0$ we have that
\begin{equation}\label{e:contraction property}
\alpha_\eta(C)\subseteq \{z\in\C:|z|<\varepsilon\}\times S^1
\end{equation}
for $\eta$ small enough.

With help of these scaling automorphisms we define for any cocycle $\mb f(n,\cdot)$ the probability measures $\sigma_n$ and $\tau_n$ by setting
\begin{equation}\label{e:sigma_n}
\sigma_n(B)= \mu\big(\{x\in X: \alpha_{n^{-1/2}}\mb f(n,x)\in B\}\big),
\end{equation}
and
\begin{equation}\label{e:tau_n}
\tau_n(B) = \frac{1}{n} \sum_{k=1}^n \sigma_k(B),
\end{equation}
for every Borel set $B\subseteq G$ and $n\geq 1$, the scaling rate $n^{-1/2}$ chosen in connection with the fact that the right Haar measure $\lambda$, which in our case is the product measure $\lambda_\C \times \lambda_{S^1}$ of the two-dimensional Lebesgue measure with the normed Haar measure of $S^1$, is transformed by the scaling automorphisms according to the equation
\begin{equation}\label{e:lambda alpha eta}
\lambda\big(\alpha_{\eta}(B)\big)=  \eta^2 \lambda(B),
\end{equation}
for every Borel set $B\subseteq G$ and $\eta>0$.

The aim of this section is to prove - as counterpart to \cite{joint recurrence} and \cite{Gernot} - the following theorem on the `weak' growth of transient cocycles.

\begin{thm}\label{t:recurrence criterion}
Suppose that $T$ is a ergodic and measure preserving automorphism of a standard probability space $(X,\mf B,\mu)$, and $\mb f(n,\cdot)$ is a transient cocycle taking values in the semi-direct product $G=\C\rtimes S^1$. 
Then, in analogy to \cite{joint recurrence} and \cite{Gernot},
\begin{equation}\label{e:recurrence1}
\sup_{\eta > 0}\limsup_{n\rightarrow\infty} \tau_n\big(B(0,\eta)\times S^1\big) / \eta^2 < \infty
\end{equation}
and
\begin{equation}\label{e:recurrence2}
\liminf_{\eta\rightarrow 0+} \liminf_{n\rightarrow\infty} \tau_n\big(B(0,\eta)\times S^1\big) / \eta^2 = 0,
\end{equation}	 
with $B(0,\eta)=\{z\in\C: |z|<\eta\}$.
\end{thm}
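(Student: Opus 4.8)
The plan is to follow the strategy of \cite{joint recurrence} and \cite{Gernot}, transporting it to the contracting automorphisms $\alpha_\eta$ of $G=\C\rtimes S^1$. The starting point is the skew product $T_{f}\colon X\times G\to X\times G$, $T_f(x,g)=(Tx,\,f(x)\,g)$. Since $G$ is unimodular — it is the semidirect product of the abelian group $\C$ with the compact group $S^1$ acting by measure preserving rotations, so left and right Haar measure coincide with $\lambda=\lambda_\C\times\lambda_{S^1}$ — left translation by $f(x)$ preserves $\lambda$, and hence $T_f$ preserves the (infinite) measure $\mu\times\lambda$. Moreover every right translation $R_h(x,g)=(x,gh)$ commutes with $T_f$ and preserves $\mu\times\lambda$, which furnishes the spatial homogeneity in the $G$-direction that replaces the translation invariance used in the $\R^d$-case. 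The key input is the standard fact that a cocycle is transient precisely when $T_f$ is dissipative; in quantitative form this yields finiteness of the Green-type sums
\[
\mc G(V,W)=\sum_{k\ge 1}\int_X \lambda\big(\mb f(k,x)\,V\cap W\big)\,d\mu(x)<\infty
\]
for all relatively compact $V,W\subseteq G$, obtained from $\sum_{k\ge1}(\mu\times\lambda)\big((X\times V)\cap T_f^{-k}(X\times W)\big)$ by Fubini and the left-invariance of $\lambda$.

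Next I would translate the quantities in (\ref{e:recurrence1}) and (\ref{e:recurrence2}) into this language. Fix a small relatively compact neighbourhood $V$ of $1_G$. Unwinding the definition (\ref{e:sigma_n}), one has $\sigma_k(B(0,\eta)\times S^1)=\mu\{x:\mb f(k,x)\in B(0,\eta k^{1/2})\times S^1\}$, and comparing the indicator of $B(0,\eta k^{1/2})\times S^1$ with the averaged indicator $g\mapsto\lambda(\mb f(k,x)V\cap\,\cdot\,)$ shows that $\sigma_k(B(0,\eta)\times S^1)$ is, up to fixed multiplicative constants depending only on $V$, comparable to $\lambda(V)^{-1}\int_X\lambda\big(\mb f(k,x)V\cap(B(0,\eta k^{1/2})\times S^1)\big)\,d\mu$. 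The crucial bookkeeping is the Haar scaling (\ref{e:lambda alpha eta}): $\lambda\big(B(0,\eta k^{1/2})\times S^1\big)=\eta^2\,k\cdot\lambda\big(B(0,1)\times S^1\big)$, which is exactly the source of the normalising factor $\eta^{2}$.

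For the upper estimate (\ref{e:recurrence1}) I would note that for $k\le n$ all the sets $B(0,\eta k^{1/2})\times S^1$ are contained in the single box $W_n=B(0,\eta n^{1/2})\times S^1$, whose Haar volume is $\asymp\eta^2 n$. Summing the comparison above,
\[
n\,\tau_n\big(B(0,\eta)\times S^1\big)=\sum_{k\le n}\sigma_k\big(B(0,\eta)\times S^1\big)\lesssim \lambda(V)^{-1}\sum_{k\le n}\int_X\lambda\big(\mb f(k,x)V\cap W_n\big)\,d\mu ,
\]
and the right-hand side is the expected occupation time of $X\times W_n$ by the $T_f$-orbit smeared over $X\times V$. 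By dissipativity together with the commuting right-translation action, this occupation time grows at most proportionally to the volume of $W_n$, i.e. it is $O(\eta^2 n)$ uniformly in $\eta$ and $n$; dividing by $n$ gives $\tau_n(B(0,\eta)\times S^1)/\eta^2\le C$, which is (\ref{e:recurrence1}).

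For (\ref{e:recurrence2}) I would argue by contradiction: if the double liminf were some $2c>0$, then $\tau_n(B(0,\eta)\times S^1)\ge c\,\eta^2$ for every sufficiently small $\eta$ and all large $n$. Feeding this lower bound back through the comparison and summing the contributions over a geometric range of scales $\eta=2^{-j}$ (equivalently, over dyadic blocks of times, on which the scaling $k^{-1/2}$ is essentially constant) would force the Green-type sum $\mc G(V,W)$ to diverge for a suitable relatively compact $W$ — equivalently, $T_f$ would be conservative and the cocycle recurrent, contrary to hypothesis; since the double liminf is manifestly nonnegative it must therefore vanish. The main obstacle is precisely this last step: because the scaling rate $n^{-1/2}$ rigidly couples the spatial scale $\eta$ to the time $n$, converting the purely spatial finiteness coming from dissipativity into the coupled scale-and-time statements (\ref{e:recurrence1})--(\ref{e:recurrence2}) requires a careful summation over scales, controlling the overlap of the successive boxes $B(0,\eta k^{1/2})\times S^1$ and the erosion/dilation errors introduced by the smearing neighbourhood $V$. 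This telescoping over scales, carried out for the contracting automorphisms $\alpha_\eta$ and the $2$-dimensional Haar scaling in place of the arguments of \cite{joint recurrence} and \cite{Gernot}, is the technical heart of the proof.
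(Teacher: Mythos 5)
Your argument has a genuine gap at its foundation. Both your upper bound for \itemref{e:recurrence1} and your contradiction argument for \itemref{e:recurrence2} rest on the claimed ``standard fact'' that transience of the cocycle (equivalently, dissipativity of the skew product $T_f$) yields finiteness of the Green-type sums $\mc G(V,W)=\sum_{k\geq 1}\int_X \lambda\big(\mb f(k,x)V\cap W\big)\,d\mu$, and, in the converse direction, that divergence of such sums forces conservativity. This equivalence is a Markov-property (Chung--Fuchs) phenomenon: it is valid for i.i.d.\ increments, but it fails for cocycles over a general ergodic base. What dissipativity gives, via the Hopf decomposition, is only the \emph{almost everywhere} finiteness of the occupation times $\sum_{k}1_{X\times W}\circ T_f^k$; the integrals of these sums, i.e.\ the expected occupation times, can perfectly well diverge. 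A dissipative transformation can admit a finite-measure set $A$ with $\sum_{n}(\mu\times\lambda)(A\cap T_f^{-n}A)=\infty$ (spread $A$ along the wandering tower with slowly decaying overlaps), and the commuting right translations do not exclude this: commutation shows that the conservative part is invariant, which yields the recurrence/transience dichotomy, but it carries no quantitative control of expected occupation times. Indeed, if your implication were true, Theorem \ref{t:recurrence criterion} would immediately upgrade to a summability statement for the $\sigma_k$, and the recurrence criterion of Theorem \ref{t:recurrence random walk} would follow from the much weaker hypothesis $\sum_n P[\,|S_n|\leq \eta\,]=\infty$; the reason the results of \cite{joint recurrence}, \cite{Gernot} and of this paper are formulated with Ces\`aro averages and liminfs is precisely that no such Green-function dichotomy is available in the stationary, non-i.i.d.\ setting.

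The input that actually replaces Green-sum finiteness — and which is absent from your proposal — is the structure theorem for transient cocycles from \cite{joint recurrence} and \cite{Gernot} (resting on \cite{recurrence}): transience produces a set $B$ with $\mu(B)=1/L$ and a relatively compact neighbourhood $U$ of $1_G$ with no $U$-returns to $B$, and then a transfer function $b:X\rightarrow G$, equal to $1_G$ on $B$, such that the cohomologous cocycle $\mb f'(n,x)=b(T^nx)\cdot\mb f(n,x)\cdot b(x)^{-1}$ has, for a.e.\ $x$, orbit values $V_x=\{\mb f'(n,x):n\in\Z\}$ which are $U$-separated and each attained exactly $L$ times, as in \itemref{e:V_x1} and \itemref{e:V_x2}. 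This converts transience into a \emph{pointwise} packing statement: choosing $W$ with $W^{-1}W\subseteq U$, the sets $Wg$, $g\in V_x$, are pairwise disjoint, so the number of $k\leq n$ with $\mb f'(k,x)\in\alpha_{n^{1/2}}(B_\eta)$ is at most $L\,n\,\lambda\big(\alpha_{n^{-1/2}}(W)\cdot B_\eta\big)/\lambda(W)$; integrating in $x$ gives \itemref{e:tau_n one} and hence \itemref{e:recurrence1} after passing back through the cohomology. For \itemref{e:recurrence2} the same orbitwise count is refined over dyadic time blocks: each orbit point $g$ can be counted in at most $O\big(\log(1\vee\eta\|g\|^{-1})\big)$ blocks, and since $\int_{G\setminus\{0\}\times S^1}\log(1\vee\|h\|^{-1})\,d\lambda<\infty$ one obtains the summed estimate \itemref{e:tau_n two}, from which \itemref{e:recurrence2} follows by a pigeonhole over $N$ scales. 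The ``telescoping over scales'' that you correctly identify as the technical heart, but leave unexecuted, is exactly this counting argument, and it cannot be run on occupation times of the skew product alone: it requires the separated-orbit structure supplied by the cohomological reduction.
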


Note that the recurrence criterion Theorem \ref{t:recurrence random walk} is an immediate corollary.
As already mentioned in the introduction, the proof of Theorem \ref{t:recurrence criterion} is based on the same arguments used in \cite{joint recurrence}.
Let us start with its preparations.

If $\mb f(n,\cdot)$ is transient then we can find a Borel set $B\subseteq X$ of positive measure and a relatively compact open neighborhood $U$ of the identity $1_G$ such that 
\begin{equation}
\mu\big( T^{n} B\cap B \cap \{x\in C: \mb f(n,x)\in U \}\big)= 0
\end{equation}
for every $n\in\Z$. 
Decreasing the set $B$ if necessary we may even assume that 
\[
\mu(B)=1/L
\] 
for some integer $L\geq 1$. 
As in \cite{joint recurrence} and \cite{Gernot} we find a measurable function $b:X\longrightarrow G$ with $b(x)=1_G$ on $B$ so that the cocycle defined by setting
\[
\mb f'(n,x)= b(T^n x)\cdot \mb f(n,x)\cdot b(x)^{-1}
\] 
satisfies the following properties at $\mu$-almost every point $x$ in $X$:
\begin{equation}\label{e:V_x1}
g\cdot h^{-1}\notin U \text{ for every two distinct } g,h\in V_x=\{\mb f'(n,x):n\in\Z\},
\end{equation}
and 
\begin{equation}\label{e:V_x2}
\big|\{n\in\Z: \mb f'(n,x)=g\}\big| = L
\end{equation}
for every $g\in V_x$.

With help of these properties one proves the following lemma.

\begin{lem}\label{l:tau_n}
Let $U$ and $B$ be as above and $W$ be an open neighbourhood of $1_G$ such that $W^{-1}\cdot W\subseteq U$. Then for every $\eta>0$ and integer $N\geq 1$,
\begin{equation}\label{e:tau_n one}
\limsup_{n\rightarrow\infty} \tau_n (B_\eta) \leq L \lambda(W)^{-1} \,\lambda (B_\eta),
\end{equation}
and 
\begin{equation}\label{e:tau_n two}
\limsup_{n\rightarrow\infty} \sum_{k= 0}^{N} 2^{k} \tau_{2^{n+k}}(B_{2^{-k/2}\eta}) \leq \frac{L}{\log 2} \lambda(W)^{-1} \,\lambda(B_\eta),
\end{equation}
with $B_\eta= \{z\in\C: |z|<\eta\}\times S^1$, the measures $\tau_n$ being defined by the equations (\ref{e:sigma_n}) and (\ref{e:tau_n}).
\end{lem}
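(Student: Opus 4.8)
The plan is to exploit the separation property \itemref{e:V_x1} to turn everything into a packing estimate for the orbit points $V_x=\{\mb f'(n,x):n\in\Z\}$ in $G$, and then to feed this estimate into a counting argument for the measures $\sigma_j$. Since $W^{-1}W\subseteq U$ and, by \itemref{e:V_x1}, $g\cdot h^{-1}\notin U$ for distinct $g,h\in V_x$ (hence also $h\cdot g^{-1}\notin U$), the right translates $\{W\cdot g:g\in V_x\}$ are pairwise disjoint. As $W$ is relatively compact there is a constant $C=C_W>0$ with $\pi_\C(W)\subseteq B(0,C)$, so that $W\cdot g\subseteq B_{R+C}$ whenever $g\in B_R$; since $\lambda$ is right invariant with $\lambda(W\cdot g)=\lambda(W)$, disjointness yields the packing bound $\#\{g\in V_x\cap B_R\}\leq\lambda(W)^{-1}\lambda(B_{R+C})$. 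Combined with the $L$-to-one property \itemref{e:V_x2} this gives, at $\mu$-almost every $x$, the estimate $\#\{n\in\Z:\mb f'(n,x)\in B_R\}\leq L\,\lambda(W)^{-1}\lambda(B_{R+C})$ for every $R>0$. I shall work with $\mb f'$ in place of $\mb f$ throughout; taking $b$ bounded (after discarding a set of small measure and exhausting), the identity $\alpha_{n^{-1/2}}\mb f'(n,x)=\alpha_{n^{-1/2}}(b\circ T^n)\cdot\alpha_{n^{-1/2}}\mb f(n,x)\cdot\alpha_{n^{-1/2}}(b)^{-1}$ shows that the two cocycles differ after scaling only by factors tending to the compact subgroup $\{0\}\times S^1$, which leaves $B_\eta$ invariant from both sides, so the $o(1)$-perturbation of the radius is absorbed by varying $\eta$ slightly and the bounds for $\sigma_n,\tau_n$ transfer from $\mb f'$ to $\mb f$.

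For \itemref{e:tau_n one} I would write $\sigma_j(B_\eta)=\mu(\{x:\mb f'(j,x)\in B_{\sqrt j\,\eta}\})$ and use that $B_{\sqrt j\,\eta}\subseteq B_{\sqrt n\,\eta}$ for $j\leq n$. Summing the indicators and applying the packing bound pointwise,
\[
n\cdot\tau_n(B_\eta)=\int\#\{j\leq n:\mb f'(j,x)\in B_{\sqrt j\,\eta}\}\,d\mu\leq L\,\lambda(W)^{-1}\lambda(B_{\sqrt n\,\eta+C}).
\]
As $\lambda(B_\rho)=\pi\rho^2$, dividing by $n$ and letting $n\to\infty$ renders the boundary term $C$ negligible and produces $\limsup_n\tau_n(B_\eta)\leq L\,\lambda(W)^{-1}\lambda(B_\eta)$.

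The inequality \itemref{e:tau_n two} is the genuinely delicate one. Writing $2^{k}\tau_{2^{n+k}}(B_{2^{-k/2}\eta})=2^{-n}\sum_{j=1}^{2^{n+k}}\sigma_j(B_{2^{-k/2}\eta})$ and summing over $k$, the left side equals $2^{-n}\int S_n(x)\,d\mu$, where $S_n(x)$ counts the pairs $(j,k)$ with $0\leq k\leq N$, $1\leq j\leq 2^{n+k}$ and $|\pi_\C(\mb f'(j,x))|<(j\,2^{-k})^{1/2}\eta$. I would reorganise this count by summing over the orbit points: a point $g\in V_x$ of radius $r=|\pi_\C(g)|$ contributes, for each of its $L$ indices $j$, to precisely those levels $k$ with $j\,2^{-n}\leq 2^{k}<j\,\eta^2/r^2$, hence to an interval of levels whose length is controlled by $\log_2(\eta^2/r^2)$ after the window $[0,N]$ and the $2^{-n}$-normalisation are accounted for. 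The number of scales a point can reach is thus \emph{logarithmic} in its radius, and summing this logarithmic weight against the packing density $\#\{g\in V_x:r(g)<R\}\leq L\lambda(W)^{-1}\pi R^2$ by a layer-cake (Abel) summation in $R$ replaces $\sum_k$ by $\int\frac{dR}{R}$; the passage from $\log_2$ to $\log$ is exactly what turns the geometric factor into $1/\log 2$, giving $\limsup_n 2^{-n}\int S_n\,d\mu\leq\frac{L}{\log 2}\lambda(W)^{-1}\lambda(B_\eta)$.

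The main obstacle I anticipate is keeping the bound independent of $N$. The naive estimate $\limsup_n\sum_k\leq\sum_k\limsup_n$ together with \itemref{e:tau_n one} only yields the useless bound $(N+1)\,L\lambda(W)^{-1}\lambda(B_\eta)$, because it recounts the small-radius points at every single scale. The improvement hinges on the fact that each orbit point actually reaches only finitely many of the scales $2^{-k/2}\eta$, so the correct bookkeeping is the per-point level count above rather than a scale-by-scale application of the first inequality. Making the boundary contributions genuinely negligible inside the $\limsup$ — the truncations at $k=0$ and $k=N$, the additive constant $C$, and the switch between $\mb f$ and $\mb f'$ — is where the care is required, and is precisely what pins down the constant $1/\log 2$.
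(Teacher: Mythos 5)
Your treatment of \itemref{e:tau_n one} and of the cohomology transfer between $\mb f$ and $\mb f'$ is correct and essentially the paper's own argument (the paper phrases the packing estimate multiplicatively, writing $\lambda\big(W\cdot\alpha_{n^{1/2}}(B_\eta)\big)=n\,\lambda\big(\alpha_{n^{-1/2}}(W)\cdot B_\eta\big)$ and using the contraction property, rather than your additive constant $C$; this difference is immaterial). The genuine gap is in \itemref{e:tau_n two}, exactly at the step you yourself flag as delicate. Your pivotal claim --- that a point $g$ of radius $r$ contributes, for each of its time indices $j$, to an interval of levels whose length is controlled by $\log_2(\eta^2/r^2)$ --- is false. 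The valid levels are $\{0\leq k\leq N:\ j2^{-n}\leq 2^k<j\eta^2/r^2\}$, and for every $j\leq 2^n$ the lower constraint $j2^{-n}\leq 2^k$ is vacuous, so the set is $\{0\leq k\leq N:\ 2^k<j\eta^2/r^2\}$, of cardinality of order $\min\big(N+1,\ n+\log_2(\eta^2/r^2)\big)$: a point of radius $r\leq 2^{(n-N)/2}\eta$ possessing a time index $j$ near $2^n$ is counted at \emph{all} $N+1$ levels. The per-point level count is thus genuinely $n$-dependent, and the phrase ``after the window $[0,N]$ and the $2^{-n}$-normalisation are accounted for'' hides, rather than resolves, precisely this point.

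The plan can be repaired, but by a different mechanism than the one you describe: summing the corrected count $\min\big(N+1,\,2\log_2(2^{n/2}\eta/r)+O(1)\big)$ against the packing bound $\#\{g\in V_x:\ |\pi_\C(g)|<R\}\leq\lambda(W)^{-1}\pi(R+C)^2$ by layer cake gives a bound of order $\lambda(W)^{-1}\pi\,(2^{n/2}\eta)^2$, and it is this factor $2^n$, produced by packing at the radius scale $2^{n/2}\eta$ --- not an $n$-free logarithmic weight --- that the normalisation $2^{-n}$ cancels. Carried out, this yields
\begin{equation*}
\limsup_{n\rightarrow\infty}\sum_{k=0}^{N}2^k\,\tau_{2^{n+k}}\big(B_{2^{-k/2}\eta}\big)\ \leq\ L\Big(1+\tfrac{1}{\log 2}\Big)\,\lambda(W)^{-1}\lambda(B_\eta),
\end{equation*}
which is finite and independent of $N$, hence sufficient for Theorem \ref{t:recurrence criterion}, but weaker than the constant $L/\log 2$ asserted in the lemma. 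The paper's device for obtaining the sharper constant is the one your proposal lacks: it bounds the sum over \emph{all} dyadic scales $m$, where the coupled constraint $k\leq 2^m\leq k\eta^2\|g\|^{-2}$ makes the per-(point, time-index) scale count at most $(2/\log 2)\log\big(1\vee\eta\|g\|^{-1}\big)$ independently of any window, separates off a compact core $B_r$ contributing only $O(1)$ per scale, and then kills the core term by substituting $\eta\mapsto 2^{m/2}\eta$, discarding the first $m$ summands and dividing by $2^m$; this shift is exactly why the statement carries the index $2^{n+k}$ inside a $\limsup$ over $n$. (To be fair, the paper's count also silently drops an integer-rounding term of the same nature as yours, so its stated constant is optimistic as well; but its shift trick, unlike your direct window count, is what removes the $n$-dependence from the per-point estimate.)
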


\begin{proof}[Proof of Lemma \ref{l:tau_n}]
First of all note that both relations (\ref{e:tau_n one}) and (\ref{e:tau_n two}) are cohomlogy invariant in the sense that they remain true when replacing $\tau_n$ by the analogously defined measures $\tau_n'$ corresponding to the cocycle $\mb f'(n,x)$ and vice-versa.
Indeed, whenever $\alpha_{n^{-1/2}}\mb f(n,x)$ belongs to $B_\eta$ and both $\alpha_{n^{-1/2}}b(T^n x)$ and $\alpha_{n^{-1/2}} b(x)^{-1}$ are in $B_{\eta'/2}$ then
\[
\alpha_{n^{-1/2}} \mb f'(n,x)=  \alpha_{n^{-1/2}}b(T^n x)\cdot \alpha_{n^{-1/2}}\mb f(n,x)\cdot \alpha_{n^{-1/2}} b(x)^{-1}
\]
is contained in $B_{\eta+\eta'}\times S^1$, for arbitrary $\eta$, $\eta'>0$.
Together with the contraction property (\ref{e:contraction property}),
we see immediately that $\limsup_{n\rightarrow\infty} \sigma_n (B_{\eta}) - \sigma_n' (B_{\eta+\eta'}) \leq 0$ and hence
\[
\limsup_{n\rightarrow\infty} \tau_n (B_{\eta}) - \tau_n'(B_{\eta+\eta'}) \leq 0,
\]
for every $\eta$, $\eta'>0$.
By symmetry the same is true when interchanging $\tau_n$ and $\tau_n'$ and our claim is proved.

As immediate consequence of property (\ref{e:V_x1}) we conclude the following estimate for almost every $x\in X$ and $n\geq 1$:
\[
\begin{aligned}
\big|\{1 \leq k \leq n:\: &\mb f'(k,x)\in \alpha_{k^{1/2}} (B_\eta)\}\big| \leq \\ 
&\leq \big|\{1 \leq k \leq n: \mb f'(k,x)\in \alpha_{n^{1/2}} (B_\eta)\}\big| \leq \\
&\leq L \lambda\big(W\cdot \alpha_{n^{1/2}} (B_\eta) \big)/\lambda(W) \\
&= L n \lambda\big(\alpha_{n^{-1/2}}(W) \cdot B_\eta \big)/\lambda(W),
\end{aligned}
\]
since $Wg \cap Wh= \varnothing$ for every two distinct $g, h$ from $V_x$.
Integrating this `strong' estimate with respect to the measure $\mu$ we conclude the following `weak' estimate for the growth of $\mb f'(n,\cdot)$:
\[
\tau'_n (B_\eta) = \frac{1}{n}\sum_{k=1}^{n} \sigma'_k (B_\eta) \leq L \lambda \big(\alpha_{n^{-1/2}}(W) \cdot B_\eta\big)/\lambda(W),
\]
and therefore $\limsup_{n\rightarrow\infty} \tau'_n (B_\eta) \leq L \lambda (B_\eta)/\lambda(W)$. 
By cohomology the same relation holds for $\tau_n$. 

We turn to the proof of (\ref{e:tau_n two}).
For any group element $g=(z,e^{i\gamma})$ we set $\|g\|= |z|$.  
Let $\varepsilon>0$ and choose $r\geq 0$ large enough so that $\alpha_{\eta^{-1}}(W)\cdot B_1\subseteq B_{1+\varepsilon}$ for every $\eta\geq r$.
For arbitrary integer $m\geq 1$ and $\eta > 0$,
\[
\begin{aligned}
&\sum_{n= 1}^{\infty} \big|\{1\leq k\leq 2^n: \mb f'(k,x)\in \alpha_{k^{1/2}} (B_{2^{-n/2}\eta})\setminus B_r\}\big| \\
& = \sum_{g\in V_x\setminus B_r} \big|\{n\geq 1: g= \mb f'(k,x) \text{ for some }k\in (0,2^n]\text{ with } \|g\|\leq  k^{1/2} 2^{-n/2}\eta\}\big| \\
& = \sum_{g\in V_x\setminus B_r} \big|\{n\geq 1: g= \mb f'(k,x) \text{ for some }k\text{ with } k\leq 2^n \leq k \eta^2/\|g\|^{2} \}\big| \\
& \leq  2L/\log 2 \sum_{g\in V_x\setminus B_r} \log\big(1\vee \eta \|g\|^{-1}\big),  
\end{aligned}
\]
since $\big|\{n\geq 0: k\leq 2^n \leq k \eta^2\|g\|^{-2} \}\big|\leq (2/\log 2)\cdot \log(1\vee \eta \|g\|^{-1})$.
By our assumption on $r$,
\[
\sup_{h\in W\cdot g} \|h\| / \|g\| 
\leq \sup_{h\in \alpha_{\|g\|^{-1}}(W)\cdot B_1} \|h\| \leq 1+\varepsilon
\] 
for every $g$ outside $B_r$ and therefore
\[
\begin{aligned}
\sum_{g\in V_x\setminus B_r} \log (1&\vee \eta \|g\|^{-1}) \leq  \sum_{g\in V_x\setminus B_r} \inf_{h\in W\cdot g} \log\big(1\vee \eta (1+\varepsilon) \|h\|^{-1}\big) \\
& \leq \lambda(W)^{-1} \int_{G\setminus \{0\}\times S^1} \log\big(1\vee \eta (1+\varepsilon) \|h\|^{-1}\big) \: d\lambda(h) \\
&= \eta^2 (1+\varepsilon)^2 \lambda(W)^{-1} \int_{G\setminus \{0\}\times S^1} \log\big(1\vee \|h\|^{-1}\big) \: d\lambda(h),
\end{aligned}
\]
where $\int_{G\setminus \{0\}\times S^1} \log \big(1\vee \|h\|^{-1}\big) \: d\lambda(h) = \int_{0<|z|<1} \log |z|^{-1} \: d\lambda_\C(z) = \pi / 2$. 
We thus may conclude that for every $N\geq 1$
\[
\begin{aligned}
\sum_{n= 0}^{N} \big|\{1\leq &k\leq 2^n: \mb f'(k,x)\in \alpha_{k^{1/2}} (B_{2^{-n/2}\eta})\}\big| \\
&\leq  \sum_{n= 0}^{N} \big|\{1\leq k\leq 2^n: \mb f'(k,x)\in  B_r)\}\big| + \\
& +  \sum_{n= 0}^{N} \big|\{1\leq k\leq 2^n: \mb f'(k,x)\in\alpha_{k^{1/2}} (B_{2^{-n/2}})\setminus B_r\}\big|  \\
&\leq  \frac{N L \lambda(W\cdot B_r)}{\lambda(W)} + \frac{L(1+\varepsilon)^2}{\lambda(W) \log 2} \pi\eta^2
\end{aligned}
\]
from which we again by integration follow that
\[
\sum_{n= 0}^{N} 2^{n} \tau'_{2^n}(B_{2^{-n/2}\eta}) \leq  \frac{N L \lambda(W\cdot B_r)}{\lambda(W)} + \frac{L(1+\varepsilon)^2}{\lambda(W) \log 2} \pi\eta^2.
\]
Substituting $\eta$ by $2^{m/2}\eta$, omitting the first $m$ terms in the series and dividing by $2^m$ we arrive at 
\[
\sum_{n= 0}^{N} 2^{n} \tau'_{2^{n+m}}(B_{2^{-n/2}\eta}) \leq  \frac{N L \lambda(W\cdot B_r)}{2^m \lambda(W)} + \frac{L(1+\varepsilon)^2}{\lambda(W) \log 2} \pi\eta^2,
\]
for every $m\geq 1$.
This shows that for arbitrary $N\geq 1$ and $\eta>0$
\[
\limsup_{m\rightarrow\infty} \sum_{k= 0}^{N} 2^{k} \tau'_{2^{m+k}}(B_{2^{-k/2}\eta}) \leq   \frac{L(1+\varepsilon)^2}{\lambda(W) \log 2} \lambda(B_\eta).
\]
Since $\varepsilon>0$ was arbitrary we conclude by cohomology that (\ref{e:tau_n two}) holds.
\end{proof}

\begin{proof}[Proof of Theorem \ref{t:recurrence criterion}]
The first assertion of the theorem is already contained in Lemma \ref{l:tau_n}.  
Furthermore from 
\[
\limsup_{n\rightarrow\infty} \sum_{k= 0}^{N} 2^{k} \tau'_{2^{n+k}}(B_{2^{-k/2}\eta}) \leq   \frac{4L}{\log 2} \lambda(W)^{-1} \lambda(B_\eta).
\]
we conclude that 
\[
\liminf_{n\rightarrow\infty} \tau'_{2^{n+k}}(B_{2^{-k/2}\eta}) / \lambda(B_{2^{-k/2} \eta})  \leq    \frac{4L}{N \log 2} \lambda(W)^{-1},
\]
for some $0\leq k \leq N$. 
As $N$ was arbitrary equation (\ref{e:recurrence2}) holds and the theorem is proved. 
\end{proof}

\begin{rem}
Of course Theorem \ref{t:recurrence criterion} can be proved in more general setting: 
Suppose that $G$ is a locally compact second countable group for which there exists a compact subgroup $K\leq G$ and a one-parameter group of scaling automorphisms $\{\alpha_\eta\}_{\eta>0}$ such that
\begin{enumerate}
\item the map $(g,\eta)\mapsto \alpha_\eta(g)$ is jointly continuous,
\item $\alpha_\eta(K)=K$ for every $\eta>0$, thus $\{\alpha_\eta\}_{\eta >0}$ acts on the homogeneous space $G/K =\{G\cdot K\: g\in G\}$ by setting $\alpha_\eta(g\cdot K)=K\cdot\alpha_\eta(g)$,
\item the sets $\bigcup_{\eta'\leq\eta} \alpha_{\eta'}(U\cdot K)$ increase to $G$ as $\eta\rightarrow\infty$, for every open neighbourhood $U$ of the identity $1_G$. In other words the group $\{\alpha_\eta\}_{\eta>0}$ contracts $G/K$ to its origin as $\eta\rightarrow 0+$.
\end{enumerate}
It is an immediate consequence of (i) that the right-invariant Haar measure $\lambda$ is transformed according to the equation
\begin{equation*}
\lambda\circ\alpha_{\eta}^{-1}(B)=  \eta^d \lambda(B),
\end{equation*}
for every Borel set $B\subseteq G$, for some fixed constant $d>0$. 
For the definition of the measures $\sigma_n$ and $\tau_n$ the appropriate scaling of the cocycle is then $n^{-1/d}$ instead of $n^{-1/2}$. 
If we fix a relatively compact neighbourhood $U_0$ of $1_G$ the open sets 
\[
B_\eta= \bigcup_{\eta'\leq\eta} \alpha_{\eta'}(K\cdot U_0\cdot K),
\] 
with $\eta>0$, form a basis of $K$-invariant neighbourhoods of the origin in $G/K$. 
The proof of (\ref{e:tau_n one}) in Lemma \ref{l:tau_n} is verbatim and setting $\|g\|= \inf\{\eta: g\in B_\eta\}$ in the second part of the proof of Lemma \ref{l:tau_n} yields the estimate (\ref{e:tau_n two}) with the constant $L/\log 2$ replaced by $2 L/(d \log 2)$, since
\[
\int_{G\setminus K} \log (1\vee \|h\|^{-1}) d\lambda(h) = \lambda(B_1) \int_0^1 d r^{d-1} \log r^{-1} \: dr = \lambda(B_1) / d.
\] 
\end{rem}

\section*{Acknowledgements}
I would like to thank my supervisor Klaus Schmidt for support and interesting conversation.

\end{document}